\DeclareMathOperator\supp{supp}
\DeclareMathOperator{\sign}{sgn}
\newfont{\bbf}{msbm10 scaled\magstep1}
\newcounter{glob}[section]
\renewcommand\theglob{%
	\ifnum\arabic{section}=0\else\arabic{section}.\fi %
	\arabic{glob}}
\newtheorem{thm}[glob]{Theorem}
\newtheorem{lem}[glob]{Lemma}
\newtheorem{cor}[glob]{Corollary}
\newtheorem{defi}[glob]{Definition}
\newtheorem{ex}[glob]{Example}
\newtheorem{propo}[glob]{Proposition}
\def\Z{\mathbb{Z}}
\def\N{\mathbb{N}}
\def\Fr{\Z[\frac{1}{2}]}
\newcommand*{\email}[1]{
	\normalsize\href{mailto:#1}{#1}\par}
\newcommand{\keywords}[1]{\textbf{\textit{Keywords---}} #1}
\pgfplotsset{width=8cm}
\title{\bf Convergence towards the end space for random walks on Schreier graphs}
\author{Bogdan Stankov}
\affil{D\'epartement de math\'ematiques et applications, \'Ecole normale sup\'erieure, CNRS,\\ PSL Research University, 75005 Paris, France\\ \email{bogdan.zl.stankov@gmail.com}}
\date{\today}
\begin{document}
\maketitle

\begin{abstract}We consider a transitive action of a finitely generated group $G$ and the Schreier graph $\Gamma$ defined by this action for some fixed generating set. For a probability measure $\mu$ on $G$ with a finite first moment we show that if the induced random walk is transient, it converges towards the space of ends of $\Gamma$. As a corollary we obtain that for a probability measure with a finite first moment on Thompson's group $F$, the support of which generates $F$ as a semigroup, the induced random walk on the dyadic numbers has a non-trivial Poisson boundary. Some assumption on the moment of the measure is necessary as follows from an example by Juschenko and Zheng.
\keywords{Random walks on groups, Poisson boundary, Schreier graph, Thompson's group~$F$}
\end{abstract}

\section{Introduction}
\thispagestyle{empty}

Consider a finitely generated group $G$ acting on a space $X$ (on the right). For a point $x\in X$ and a generating set $S$, the Schreier graph $\Gamma=(xG,E)$ is the graph the vertex set of which is the orbit $xG$ of $x$, and the edges $E$ are the couples of the form $(y,y.s)$ for $y\in xG$ and $s\in S$. Throughout this article, we will assume the action to be transitive, that is for every $x$, $xG=X$. We take a measure $\mu$ on $G$ and will study for which $(G,\Gamma,\mu)$ the induced random walk on $\Gamma$ converges towards an end of the graph. We recall the definition of the end space. Consider an exhaustive increasing sequence $K_1\subset K_2\subset\dots$ of finite subsets of $X$. An \textit{end} of $\Gamma$ is a sequence $U_1\supseteq U_2\supseteq\dots$ where $U_n$ is an infinite connected component of the subgraph obtained by deleting the vertices in $K_n$ and adjacent edges. For more details, see Definition~\ref{end}. Our main result states:

\begin{thm}\label{main}
	Consider a finitely generated group $G$ acting transitively on a space $X$. Fix a generating set $S$ and let $\Gamma=(X,E)$ be the associated Schreier graph. Let $\mu$ be a measure on $G$ with a finite first moment such that the induced random walk on $\Gamma$ is transient. Then the random walk almost surely converges towards a (random) end of the graph.
\end{thm}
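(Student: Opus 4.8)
\section*{Proof proposal}

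The plan is to fix an exhaustion $K_1\subset K_2\subset\cdots$ as in the statement, write the trajectory as $(W_n)_{n\ge 0}$ started from a base vertex $o$, with $W_n=W_{n-1}.g_n$ and $(g_n)$ i.i.d.\ of law $\mu$, and reduce the claim to a single statement about crossings. First I would record that, since $S$ is finite, $\Gamma$ is locally finite with finite balls, so transience forces $d_\Gamma(W_n,o)\to\infty$ almost surely; in particular $W_n$ eventually avoids every finite set. Because the $K_m$ increase, every infinite component of $\Gamma\setminus K_{m+1}$ lies inside a unique component of $\Gamma\setminus K_m$, so if for each fixed $m$ the trajectory is eventually contained in a single infinite component $U^{(m)}$ of $\Gamma\setminus K_m$, then $U^{(1)}\supseteq U^{(2)}\supseteq\cdots$ is automatically nested and defines an end, towards which $(W_n)$ converges. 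Everything thus reduces to: for each fixed finite $K=K_m$, almost surely the walk changes infinite components of $\Gamma\setminus K$ only finitely often.

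Next I would channel both hypotheses through one geometric observation. Since $\Gamma$ is connected and $K$ is finite, there are only finitely many infinite components $V_1,\dots,V_k$ of $\Gamma\setminus K$, each meeting $K$ along a finite boundary. If step $n$ moves the walk from one infinite component to a different one, then every path from $W_{n-1}$ to $W_n$ passes through $K$, whence
\[
  |g_n|_S\ \ge\ d_\Gamma(W_{n-1},W_n)\ \ge\ d_\Gamma(W_{n-1},K)+d_\Gamma(W_n,K).
\]
In particular such a crossing forces $|g_n|_S$ to exceed the current distance $d_\Gamma(W_{n-1},K)$. I would then split the crossings into \emph{nearby} ones, where $W_{n-1}$ lies within a prescribed distance of $K$, and \emph{distant} ones, where $|g_n|_S$ is large. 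The first family is controlled by transience, since the walk visits any fixed finite neighborhood of $K$ only finitely often; the second is controlled by the first moment, since $\sum_{t\ge1}\mathbb P(|g_1|_S\ge t)=\sum_g\mu(g)|g|_S<\infty$ makes large jumps summably rare, so Borel--Cantelli bounds the number of $n$ for which $|g_n|_S$ exceeds any threshold growing linearly in $n$.

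The main obstacle is to make these two controls meet. If the walk escaped at positive speed, the argument would close at once: a crossing needs $|g_n|_S>d_\Gamma(W_{n-1},K)\gtrsim n$, and such jumps occur finitely often by Borel--Cantelli. But transience alone does not give linear escape — the distance to $K$ may grow sublinearly (as for simple random walk on $\mathbb{Z}^3$), so at infinitely many times the walk sits only moderately far from $K$ while occasionally making a long jump, and the crude estimate that a crossing may land \emph{anywhere} within distance $|g_n|_S$ of $K$ costs a volume factor that the first moment cannot absorb. The way I would resolve this is to use that a crossing must land in one of the finitely many \emph{other} infinite components $V_i$ — a far more restrictive event than merely jumping a long distance — and to run a conditional Borel--Cantelli argument on $\sum_n \mathbb P(\text{crossing at }n\mid \mathcal F_{n-1})$, estimating the conditional crossing probability through the finite boundaries of the $V_i$ rather than through ball volumes, and charging each crossing to the almost surely finite number of visits the walk pays to the finite neighborhood of $K$ from which a jump of length $|g_n|_S$ can reach across. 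Proving that this conditional series converges almost surely — balancing the summability $\sum_t\mathbb P(|g_1|_S\ge t)<\infty$ against the occupation of the distance levels near $K$ — is the technical heart of the argument; the finiteness of the first moment is exactly what makes the balance work, and the Juschenko--Zheng example shows it cannot be dropped.
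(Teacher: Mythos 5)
Your reduction to the claim that, for each fixed finite $K$, the walk changes infinite components of $\Gamma\setminus K$ only finitely often is exactly the right first step, and you have correctly diagnosed why the naive estimate fails: a crossing only forces $|g_n|_S\ge d_\Gamma(W_{n-1},K)$, and since transience gives no lower bound on the speed of escape, summing $\mathbb{P}(|g_1|_S\ge t)$ against the occupation times of the distance levels costs a volume factor that a first moment cannot pay for. But the resolution you sketch does not close this gap. ``Charging each crossing to the visits the walk pays to the finite neighborhood of $K$ from which a jump of length $|g_n|_S$ can reach across'' reintroduces the same volume factor, because the set of such starting points is the ball of radius $|g_n|_S$ around $K$, and its expected occupation grows with the radius; restricting the landing site to the finitely many other components $V_i$ does not help either, since each $V_i$ is infinite and a long jump can land deep inside it. You explicitly defer ``the technical heart of the argument,'' and what is deferred is precisely the step that needs a new idea rather than more careful bookkeeping.

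The missing idea is to interpolate each increment along a geodesic word instead of conditioning on the length of the jump. Write $g_{n+1}=s_1\cdots s_m$ with $m=|g_{n+1}|_S$; if $W_n$ and $W_{n+1}$ lie in different components of $\Gamma\setminus K$, then some intermediate point $W_n.s_1\cdots s_i$ lies in the \emph{fixed finite set} $K$ itself. Hence the total number of crossings is bounded by the number of pairs $(n,i)$ with $W_n.s_1\cdots s_i\in K$, whose expectation is
\[
\sum_{k\in K}\;\sum_{g}\mu(g)\sum_{i\le |g|_S}G\bigl(\mathfrak{o},\,k.(s_1\cdots s_i)^{-1}\bigr)\;\le\;G^{(\check p)}(\mathfrak{o},\mathfrak{o})\,|K|\,\|\mu\|_1<\infty,
\]
where one uses $G(\mathfrak{o},x)=G^{(\check p)}(x,\mathfrak{o})\le G^{(\check p)}(\mathfrak{o},\mathfrak{o})$ for the reversed kernel $\check p(x,y)=\sum_{x.g=y}\mu(g^{-1})$, which is finite by transience. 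This replaces your occupation-times-of-spheres accounting by $|K|$ uniformly bounded Green function values paired with the first moment, so no volume factor ever appears and no positive speed is needed; conclude by Borel--Cantelli and run over an exhaustion. This is exactly how the paper argues: it packages the count into an $\ell^1$ function $f$ with $\|f\|_1\le|K|\,\|\mu\|_1$ and pairs it with the Green function.
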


Notice that for measures with finite support, the result is straightforward. The result is also already known in the case where the action of $G$ on $X$ is non-amenable (this is a particular case of \cite[Theorem~21.16]{woess2000random}, which we recall as Theorem~\ref{woess}), under the condition of a finite first moment. An action is non-amenable when there is no $G$-invariant mean on $X$. Kesten's criterion~\cite{Kesten1959} states that for any symmetric non-degenerate measure on the group, the action is non-amenable if and only if the induced random walk on $X$ has probability of return to the origin that decreases exponentially (see Bartholdi~\cite{bartholdi} for a survey on the amenability of group actions). The general case of the cited~\cite[Theorem~21.16]{woess2000random} does not assume that the random walk is induced by a measure on a group. The result is no longer true if we assume neither that the walk is induced by a measure on a group nor that the probability of return to the origin decreases exponentially. We prove that in Proposition~\ref{counterexample}, where we construct a Markov chain $(X,P)$ that is transient, uniformly irreducible and has uniform first moment, but does not converge towards an end of $X$. 

If the action is non-amenable, the random walk induced by any non-degenerate measure is transient (see~\cite[Lemma~1.9]{woess2000random}). In the general case, transience can sometimes be obtained from the graph geometry using a comparison Lemma~\ref{var} due to Baldi-Lohoué-Peyrière~\cite{var}. Combining this lemma and the theorem we obtain:

\begin{cor}\label{all}
	Consider a finitely generated group $G$ acting transitively on a space $X$. Fix a generating set $S$ and let $\Gamma=(X,E)$ be the associated Schreier graph. Assume that $\Gamma$ is a transient graph. Then for all measures $\mu$ on $G$ with finite first moments such that $\supp(\mu)$ generates $G$ as a semigroup, the induced random walk almost surely converges towards an end of the graph.
\end{cor}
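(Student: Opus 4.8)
The plan is to derive Corollary~\ref{all} by combining the comparison Lemma~\ref{var} with the main Theorem~\ref{main}. The theorem already does the hard analytic work of showing convergence towards an end; what remains is to verify its hypotheses from the hypotheses of the corollary. The main Theorem requires two things: a finite first moment (given directly in the corollary) and transience of the induced random walk on $\Gamma$. So the entire content of the proof reduces to showing that if $\Gamma$ is a transient graph, then the random walk induced by any measure $\mu$ with finite first moment whose support generates $G$ as a semigroup is itself transient.

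\medskip

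First I would fix such a measure $\mu$ and recall that the relevant comparison is between the simple random walk on $\Gamma$ (whose transience is exactly the hypothesis ``$\Gamma$ is a transient graph'') and the $\mu$-induced random walk on $X=\Gamma$. The tool is Lemma~\ref{var} of Baldi--Lohou\'e--Peyri\`ere, which compares two reversible (or more generally suitably controlled) Markov chains on the same vertex set via their transition kernels and establishes that transience of one implies transience of the other, provided one chain dominates the other in an appropriate geometric sense. Concretely, I would check the two structural conditions such a comparison lemma needs: \emph{uniform irreducibility}, meaning there are constants $\varepsilon>0$ and $k\in\N$ such that whenever $x$ and $y$ are neighbors in $\Gamma$ the $\mu$-walk has probability at least $\varepsilon$ of moving from $x$ to $y$ in at most $k$ steps; and a \emph{finite-range or controlled-jump} condition tying the one-step transitions of the $\mu$-walk back to the graph metric of $\Gamma$, which is precisely where the finite first moment enters.

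\medskip

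The semigroup-generation hypothesis is what secures uniform irreducibility: since $\supp(\mu)$ generates $G$ as a semigroup, for every generator $s\in S$ there is a word $w=s_1\cdots s_m$ in elements of $\supp(\mu)$ with $s=w$ in $G$, hence for any vertex $x$ the element $w$ carries $x$ to $x.s$, and following that word gives a positive-probability path of bounded length from $x$ to its neighbor $x.s$. Taking the minimum over the finite generating set $S$ yields the uniform constants $\varepsilon$ and $k$. This is the step I expect to be the main (though modest) obstacle: one must take the minimum over the finitely many generators of probabilities that are a priori local, and argue by transitivity of the action that the bound is genuinely uniform over all vertices of $\Gamma$, not just near a basepoint --- transitivity lets one transport the estimate at the basepoint to every vertex by an element of $G$.

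\medskip

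With uniform irreducibility established and the finite first moment of $\mu$ supplying the jump-control hypothesis, Lemma~\ref{var} applies and gives that transience of the simple random walk on $\Gamma$ transfers to transience of the $\mu$-induced walk. At that point both hypotheses of Theorem~\ref{main} hold --- finite first moment and transience --- so the theorem yields that the $\mu$-walk converges almost surely towards a random end of $\Gamma$, which is exactly the conclusion of the corollary. I would close by remarking that no symmetry or non-degeneracy beyond semigroup generation is needed, since it is semigroup generation (rather than group generation) that guarantees the forward positive-probability paths required for irreducibility of the induced walk.
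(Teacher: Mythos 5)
Your overall strategy is the paper's: the corollary is obtained by using Lemma~\ref{var} to upgrade transience of $\Gamma$ to transience of the $\mu$-induced walk, and then invoking Theorem~\ref{main}. The gap is in how you set up the comparison lemma. Lemma~\ref{var} as stated has nothing to do with uniform irreducibility or a controlled-jump condition --- those are the hypotheses of Woess's Theorem~\ref{woess}, which you appear to have imported here by mistake. What Lemma~\ref{var} actually requires is a pointwise domination $P_1(x,y)\geq\varepsilon P_2(x,y)$ between doubly stochastic kernels with $P_2$ symmetric. The one-step kernel $P(x,y)=\sum_{x.g=y}\mu(g)$ of the $\mu$-walk need not dominate the simple random walk kernel at all (for instance $\mu$ may give zero mass to every element of $S\cup S^{-1}$), so the lemma cannot be applied to $P$ directly. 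The correct route, which your ``uniform irreducibility'' computation almost provides, is: for each $s\in S\cup S^{-1}$ pick a word $s_1\cdots s_{n_s}$ in $\supp(\mu)$ equal to $s$ in $G$, set $N=\max_s n_s$ and $c=\min_s\mu(s_1)\cdots\mu(s_{n_s})>0$, and observe that the average $Q=\frac{1}{N}\sum_{n=1}^{N}P^{n}$ satisfies $Q(x,x.s)\geq c/N$ for every vertex $x$ and every $s$, hence $Q\geq\varepsilon P_{\mathrm{SRW}}$ with $\varepsilon=c/N$. (Uniformity in $x$ is automatic because the same word moves every $x$ to $x.s$ with the same probability; no transport-by-transitivity argument is needed.) Since $Q$ is again doubly stochastic and $P_{\mathrm{SRW}}$ is symmetric and transient by hypothesis, Lemma~\ref{var} gives transience of $Q$; one must then add the standard (but not free) observation that transience of a convex combination of the powers $P^{n}$ with positive weight on $n=1,\dots,N$ is equivalent to transience of $P$ itself. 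None of this is spelled out in your proposal, and as written the domination hypothesis of the lemma is never verified.

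A second, smaller error: you claim the finite first moment of $\mu$ enters ``precisely'' in the transience step. It does not --- no moment condition is needed anywhere in the comparison argument above; the finite first moment is used only in Theorem~\ref{main}, to pass from transience to almost sure convergence towards an end. With these two corrections the proof closes exactly as you describe.
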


We will also explain how this result can be applied to Thompson's group $F$. Let us recall the definition of this group. The set of dyadic rationals $\Fr$ is the set of numbers of the form $a2^b$ with $a,b\in\Z$. \textit{Thompson's group $F$} is the group of orientation-preserving piecewise linear self-isomorphisms of the closed unit interval with dyadic slopes, with a finite number of break points, all break points being in $\Fr$. It is a finitely generated group with a canonical generating set (with two elements). See Cannon-Floyd-Parry~\cite{thomsoncfp} or Meier's book~\cite[Ch.~10]{meier} for details and properties. Its amenability is a celebrated open question. It is well known that amenability is equivalent to the existence of a non-degenerate measure with trivial Poisson boundary (see Kaimanovich-Vershik~\cite{kaimpoisson}, Rosenblatt~\cite{rosenblatt}). The boundary of a random walk induced by an action is a quotient of the boundary on the group.

The Schreier graph on $\Fr$ (of a conjugate action of $F$) has been described by Savchuk~\cite[Proposition~1]{slav10}. It is a tree that can be understood as a combination of a skeleton quasi-isometric to a binary tree, and rays attached at each point of the skeleton (see Figure~\ref{sav}). Understanding the geometry of the graph directly shows that it is transient. Kaimanovich~\cite[Theorem~14]{kaimanovichthompson} also proves this result without using the geometry of the graph. Hence by Corollary~\ref{all} we obtain

\begin{cor}
	Consider a measure on Thompson's group $F$ with a finite first moment, the support of which generates $F$ as a semigroup. Then the induced random walk on $\Fr$ has non-trivial Poisson boundary.
\end{cor}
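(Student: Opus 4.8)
The plan is to feed the geometry of the Schreier graph into Corollary~\ref{all} and then to promote the resulting convergence to non-triviality of the boundary. First, recall the geometry. By Savchuk's description~\cite{slav10}, the Schreier graph $\Gamma$ on $\Fr$ is a tree whose skeleton is quasi-isometric to the rooted binary tree, with a ray attached at each skeleton vertex (Figure~\ref{sav}). In particular $\Gamma$ is transient, and its space of ends is uncountable: already the skeleton contributes a Cantor set of ends. Since $\mu$ has finite first moment and $\supp(\mu)$ generates $F$ as a semigroup, Corollary~\ref{all} applies, and the induced walk $(X_n)$ on $\Fr$ converges almost surely to a random end $\xi_\infty$ of $\Gamma$. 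Writing $\nu_o$ for the law of $\xi_\infty$ under the walk started at $o\in\Fr$, the tail-measurable map $\omega\mapsto\xi_\infty(\omega)$ exhibits the space of ends together with the family $(\nu_o)_o$ as a quotient of the Poisson boundary of the induced chain. It therefore suffices to show that this boundary is non-trivial, i.e. that $\nu_o$ is not a single Dirac mass.

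The reduction to non-degeneracy is clean, but the non-degeneracy itself is the crux. From $\nu_o=\sum_x p_n(o,x)\,\nu_x$ and the irreducibility of the chain (guaranteed by the semigroup generation), the measures $\nu_o$ are mutually absolutely continuous, so it is enough to produce one vertex and two disjoint Borel sets of ends, each of positive harmonic measure. Fix a branch vertex $b$ of the skeleton, so that $\Gamma\setminus\{b\}$ has two infinite components $C_1,C_2$ sitting in distinct principal subtrees, each carrying an uncountable set of ends. For a nearest-neighbour walk one would be done immediately: to leave $C_i$ the trajectory must cross $b$, and a transient walk that has penetrated deep into $C_i$ escapes to an end of $C_i$ without ever returning to $b$ with positive probability, so $\nu_o(\mathrm{ends}(C_i))>0$ for both $i$.

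The genuine obstacle is that the induced walk takes unbounded jumps: a single element of $F$ can carry a vertex of $C_1$ directly to $C_2$, so the trajectory need never meet the cut vertex $b$, and the naive argument collapses. This is exactly the difficulty handled in the proof of Theorem~\ref{main}, and I would resolve it the same way, through the finite first moment: the first-moment bound controls the sizes of the jumps and shows that, with positive probability, the walk eventually settles inside a single principal subtree and escapes to an end there. Running this estimate in two subtrees, both reachable by irreducibility, yields two disjoint end-sets of positive harmonic measure, so $\nu_o$ is not a Dirac mass.

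Hence the end-boundary, and with it the Poisson boundary of the induced walk on $\Fr$, is non-trivial. As noted in the introduction, this boundary is a quotient of the Poisson boundary of $(F,\mu)$, which is then non-trivial as well, although only the statement for $\Fr$ is claimed. I expect the bookkeeping in the last paragraph to reduce almost verbatim to the argument already carried out for Theorem~\ref{main}, so the only real content beyond invoking Corollary~\ref{all} is the non-degeneracy of $\xi_\infty$ flagged above.
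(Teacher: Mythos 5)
Your first half coincides with the paper's route: transience of the Savchuk graph plus Corollary~\ref{all} gives almost sure convergence to a random end, and the problem reduces to showing that the exit measure is not a Dirac mass; you also correctly flag this non-degeneracy as the crux and identify the obstacle caused by unbounded jumps. The gap is in how you propose to close it. The assertion that ``the first-moment bound \ldots shows that, with positive probability, the walk eventually settles inside a single principal subtree and escapes to an end there'' is exactly what the argument of Theorem~\ref{main} does \emph{not} deliver. That proof establishes $\sum_n \mathbb{P}(\text{change component at step }n)<\infty$, hence the walk settles in \emph{some} component of $\Gamma\setminus K$; it says nothing about which one, and it produces no starting point from which the probability of \emph{ever} crossing is $<1$. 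Quantitatively, the bound on the crossing probability from $x$ is $\sum_y f(y)G(x,y)$, controlled only by $\|f\|_1\,G^{(\check p)}(x,x)$, which is finite but has no reason to be $<1$ and need not tend to $0$ as $x$ goes deep into a subtree. The schema ``two infinite components separated by a cut vertex, both reachable by irreducibility, finite first moment, transient $\Rightarrow$ both end-sets charged'' is simply false: the nearest-neighbour walk on $\mathbb{Z}$ with rightward drift satisfies all of these hypotheses with cut vertex $0$, yet converges to the end $+\infty$ from every starting point, so the exit measure is a single atom. Irreducibility only tells you that the harmonic functions $x\mapsto\nu_x(\mathrm{ends}(C_i))$ are identically zero or everywhere positive; you have not excluded the first alternative for one of the two subtrees.

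The paper closes this gap with a structural input specific to the Savchuk graph, namely its self-similarity. Lemma~\ref{equivsav} embeds each branch equivariantly, as a labelled graph, into the other (and the rays embed into the branches); Lemma~\ref{nontriv} then partitions the end space into five pieces, observes that at least one must carry positive exit mass, and uses the equivariant embedding to transport that mass to a disjoint piece, so the exit measure cannot be concentrated at one point. Some ingredient of this kind --- exploiting the particular geometry of the graph and of the action, not just transience, irreducibility and the first moment --- is needed, and it is the piece missing from your argument.
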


This extends the following previous results. Kaimanovich~\cite{kaimanovichthompson} and Mishchenko~\cite{mischenko2015} prove that the simple random walk on the Schreier graph given by that action has non-trivial boundary. Kaimanovich~\cite[Section~6.A]{kaimanovichthompson} further shows that it is non-trivial for walks induced by measures with supports that are finite and generate $F$ as a semigroup. We have also shown~\cite{hz} that for any measure with a finite first moment on $F$, the support of which generates $F$ as a semigroup, the walk on the group has non-trivial Poisson boundary.

The result of the corollary is false without assuming a finite first moment. Juschenko and Zheng~\cite{juszheng} have proven that there exists a symmetric non-degenerate measure on $F$ such that the induced random walk has trivial Poisson boundary. If the trajectories almost surely converge towards points on the end space, the end space endowed the exit measure on it is a quotient of the Poisson boundary. However, the self-similarity of the graph implies that the exit measure cannot be trivial, as we prove in Lemma~\ref{nontriv}. Combining the result of Juschenko-Zheng with this lemma we obtain:

\begin{cor}\label{juscor}
	There exists a finitely generated group $G$, a space $X$ and a symmetric non-degenerate measure on $G$ such that
	\begin{itemize}
		\item $G$ acts amenably and transitively on $X$,
		\item the induced random walk on the Schreier graph is transient,
		\item the induced random walk on the Schreier graph does not converge towards an end of the graph.
	\end{itemize}
	In particular, the measure described by Juschenko and Zheng~\cite[Theorem~3]{juszheng} provides an example for the action of Thompson's group $F$ on $\Fr$.
\end{cor}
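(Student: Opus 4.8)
The plan is to realise the statement with $G=F$, $X=\Fr$ carrying the conjugate action of Savchuk, the canonical generating set $S$, and $\Gamma$ the associated Schreier graph, taking for $\mu$ the symmetric non-degenerate measure of Juschenko--Zheng~\cite[Theorem~3]{juszheng}. Transitivity of the action is classical, so the task reduces to verifying the three displayed properties for this data. I would establish amenability of the action from the geometry of $\Gamma$ rather than from $F$ itself: in Savchuk's description each vertex of the tree-like skeleton carries an attached infinite ray, whose interior vertices have degree $2$. Fixing one such ray $v_0,v_1,v_2,\dots$, the segments $A_L=\{v_1,\dots,v_L\}$ satisfy $|\partial A_L|=2$ while $|A_L|=L$, so $|\partial A_L|/|A_L|\to 0$. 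These sets witness the vanishing of the Cheeger constant of $\Gamma$ and are almost invariant under every generator, so they produce a $G$-invariant mean on $\ell^\infty(X)$; equivalently, by Kesten's criterion, the vanishing Cheeger constant forces the simple random walk to have spectral radius $1$, i.e.\ subexponential return probabilities, which is exactly amenability of the action.

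For the two remaining properties, recall that $\Gamma$ is transient, either from its geometry or from Kaimanovich~\cite[Theorem~14]{kaimanovichthompson}, and that the walk furnished by~\cite[Theorem~3]{juszheng} is transient with trivial Poisson boundary; transience is what keeps the example inside the hypotheses of Theorem~\ref{main}. It then remains to exclude convergence to an end. Suppose, for contradiction, that the induced walk converged almost surely to a random end. As already noted in the discussion preceding the statement, the end space equipped with the resulting hitting measure would be a measurable quotient of the Poisson boundary of the induced walk on $\Fr$; but that boundary is trivial by Juschenko--Zheng, so the hitting measure would have to be a point mass. This contradicts Lemma~\ref{nontriv}, which uses the self-similarity of $\Gamma$ to force the hitting measure to be non-trivial. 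Hence the walk does not converge to an end, and together with the previous paragraph this yields all three bullet points and the final assertion.

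I expect the genuine difficulty to lie in the specifics of the example rather than in the closing contradiction. First, one must read Savchuk's description carefully enough to confirm that the attached rays are true degree-$2$ hairs, since this is exactly what allows a single ray to produce finite sets of vanishing isoperimetric ratio and renders the exponential boundary of the skeleton irrelevant to amenability. Second, one must confirm that the Juschenko--Zheng walk is transient and not merely recurrent, because transience is essential for the example to refute Theorem~\ref{main} once the first-moment hypothesis is dropped. Granting Lemma~\ref{nontriv} together with the identification of the exit measure as a quotient of the Poisson boundary, the concluding step is then routine.
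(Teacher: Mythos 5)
Your proposal is correct and follows essentially the same route as the paper: take the Juschenko--Zheng measure on $F$ acting on $\Fr$, note amenability of the action and transience of the induced walk, and rule out convergence to an end because the exit measure would be a quotient of the trivial Poisson boundary, contradicting Lemma~\ref{nontriv}. The one point you defer --- transience of the Juschenko--Zheng walk --- is not part of their theorem but follows from the transience of Savchuk's graph together with the comparison Lemma~\ref{var} applied to a convex combination of convolution powers of the (non-degenerate, symmetric) measure, exactly as in the proof of Corollary~\ref{all} with the first-moment hypothesis stripped away.
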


Concerning Thompson's group $F$, studying the Poisson boundary of random walks on it has been highlighted as a possible approach to proving non-amenability in the work of Kaimanovich. The results by him and Mischenko further suggested that one could consider the boundary of induced random walks $\Fr$, but that was shown impossible by the result of Juschenko-Zheng. In more recent results, Juschenko~\cite{juscof} studied walks on the space of $n$-element subsets of $\Fr$ and gave a combinatorial necessary and sufficient condition for the Poisson boundary of induced walks on that space to be non-trivial for all non-degenerate measures. In that situation, the existence a measure with trivial boundary is due to Juschenko for $n=2$ and to Schneider and Thom~\cite{Schneider2020} for a general $n$.

\section*{Acknowledgements}

I would like to thank Professor Wolfgang Woess for a discussion on the early variants of this paper which helped to improve the original version. I am very grateful to Professor Vadim Kaimanovich for his detailed comments and useful suggestions.

\section{Preliminaries}

Consider a finitely generated group $G$ acting transitively on a space $X$ and a measure $\mu$ on $G$. The random walk on $G$ is defined by multiplication on the right. That is the walk with trajectories $(g_n)$ for $n\in\N$ where $g_{n+1}=g_nh_n$ and the increments $h_n$ are sampled by $\mu$. In other words, the random walk is defined by the kernel $(g,h)\mapsto\mu(g^{-1}h)$. The trajectory of the induced random walk on $X$ starting at a point $\mathfrak{o}$ is the image of the trajectory of the walk on the group by the map:

$$(g_n)\mapsto (\mathfrak{o}.g_n).$$

Its kernel is $P(x,y)=\sum_{x.g=y}\mu(g)$. We now fix a generating set $S$ of $G$ and consider the undirected graph $\Gamma=(X,E)$ with vertices $X$ and edges $E=\{(x,x.s)\mbox{ for }s\in S,x\in X\}$. We recall that this is called the \textit{Schreier graph}, and that it is connected as we assumed the action to be transitive. It is worth noting that the directed version of the same definition is also referred to as the Schreier graph, and that in the figures in this article, the edges will have an assigned direction for easier visualisation. It is known that every connected regular graph of even degree is isomorphic to a Schreier graph. It was first proven by Gross~\cite{MR0450121} for finite graphs. For a detailed proof of the infinite case, see~\cite[Theorem~3.2.5]{thesis}. For a more in-depth study of Schreier graphs, see Grigorchuk-Kaimanovich-Nagnibeda~\cite{Grigorchuk2012}.

\begin{defi}\label{end}
	For a compact $K\subset X$ denote by $\pi_0(X\setminus K)$ the set of connected components of $X\setminus K$. There is a natural partial order defined by $K_1\leq K_2$ if and only if $K_1\subseteq K_2$. That order gives rise to a natural morphism $\pi_{1,2}:\pi_0(X\setminus K_2)\mapsto \pi_0(X\setminus K_1)$ which sends a connected component into the connected component of which it is a subset. This forms an inverse system indexed by $K\subset X$ (see~\cite[Section~3.1.2]{qtheory}). The end space is then the inverse limit
	
	$$\lim_{K\subset\Gamma}\pi_0(X\setminus K)=\{(x_K)\in\prod_{K\subset X}\pi_0(X\setminus K)|\pi_{1,2}x_2=x_1,\ K_1\subset K_2\}.$$
\end{defi}

In our case, the end space can be described using an increasing exhaustive sequence of finite sets $K_n$, as such sequences are cofinal in the set of all compact subsets. That is, any compact set is included in $K_n$ for $n$ large enough.

We use the following comparison lemma by Baldi-Lohoué-Peyrière~\cite{var}.

\begin{lem}[Comparison lemma]\label{var}
	Let $P_1(x,y)$ and $P_2(x,y)$ be doubly stochastic kernels on a countable set $X$ and assume that $P_2$ is symmetric. Assume that there exists $\varepsilon\geq 0$ such that
	
	$$P_1(x,y)\geq\varepsilon P_2(x,y)$$
	for any $x,y$. Then if $P_2$ is transient then so is $P_1$.
\end{lem}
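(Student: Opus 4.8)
The plan is to reduce the comparison for the possibly non-symmetric kernel $P_1$ to a comparison between \emph{symmetric} kernels, for which transience is governed by Dirichlet energy, and then to transfer transience from the symmetrisation of $P_1$ back to $P_1$ itself. Throughout I view $P_1,P_2$ as contractions on $\ell^2(X)$ (both are doubly stochastic, so the counting measure is invariant and $\|P_i\|_{\ell^2\to\ell^2}\le 1$), I assume $\varepsilon>0$ (the only case in which the hypothesis carries information), and I write $P_1^{*}(x,y)=P_1(y,x)$ for the $\ell^2$-adjoint and $\widehat{P}_1=\tfrac12(P_1+P_1^{*})$ for its symmetrisation, which is again symmetric and doubly stochastic.

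First I would compare $\widehat{P}_1$ with $P_2$. Since $P_2$ is symmetric, the hypothesis $P_1(x,y)\ge\varepsilon P_2(x,y)$ together with $P_1(y,x)\ge\varepsilon P_2(y,x)=\varepsilon P_2(x,y)$ gives $\widehat{P}_1(x,y)\ge\varepsilon P_2(x,y)$ for all $x,y$. For a symmetric doubly stochastic kernel $Q$ let $\mathcal{E}_Q(\phi)=\tfrac12\sum_{x,y}Q(x,y)(\phi(x)-\phi(y))^2$ be the associated Dirichlet energy of a finitely supported $\phi$, and recall the standard capacity (energy) criterion for reversible chains: $Q$ is transient if and only if $\mathrm{cap}_Q(o):=\inf\{\mathcal{E}_Q(\phi):\phi(o)=1,\ \phi\text{ finitely supported}\}>0$ for some (equivalently any) base point $o$. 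The entrywise bound yields $\mathcal{E}_{\widehat{P}_1}(\phi)\ge\varepsilon\,\mathcal{E}_{P_2}(\phi)$ for every $\phi$, hence $\mathrm{cap}_{\widehat{P}_1}(o)\ge\varepsilon\,\mathrm{cap}_{P_2}(o)$. As $P_2$ is transient the right-hand side is positive, so $\widehat{P}_1$ is transient.

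The crux is the second step: transferring transience from $\widehat{P}_1$ to $P_1$. Because $P_1$ is not symmetric, its transience is not read off its Dirichlet form, so I would instead compare Green functions through the resolvent. For $0<s<1$ set $u=(I-sP_1)^{-1}\delta_o$ and $\tilde u=(I-s\widehat{P}_1)^{-1}\delta_o$; both inverses exist since $s\|P_1\|,s\|\widehat{P}_1\|<1$. Using that a real quadratic form only sees the symmetric part, $G^{P_1}_s(o,o)=\langle\delta_o,(I-sP_1)^{-1}\delta_o\rangle=\langle(I-sP_1)u,u\rangle=\langle(I-s\widehat{P}_1)u,u\rangle$. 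Writing $[a,b]:=\langle(I-s\widehat{P}_1)a,b\rangle$, a genuine inner product since $I-s\widehat{P}_1\ge(1-s)I>0$, this gives $G^{P_1}_s(o,o)=[u,u]$, while $G^{\widehat{P}_1}_s(o,o)=[\tilde u,\tilde u]$ and, by self-adjointness of $I-s\widehat{P}_1$, $[u,\tilde u]=\langle u,(I-s\widehat{P}_1)\tilde u\rangle=\langle u,\delta_o\rangle=G^{P_1}_s(o,o)$. The Cauchy--Schwarz inequality $[u,\tilde u]^2\le[u,u]\,[\tilde u,\tilde u]$ then reads $G^{P_1}_s(o,o)^2\le G^{P_1}_s(o,o)\,G^{\widehat{P}_1}_s(o,o)$, and since $G^{P_1}_s(o,o)\ge1$ we may divide to obtain $G^{P_1}_s(o,o)\le G^{\widehat{P}_1}_s(o,o)$ for every $s<1$. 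Letting $s\uparrow 1$ and using monotone convergence gives $\sum_n P_1^{\,n}(o,o)\le\sum_n\widehat{P}_1^{\,n}(o,o)$; as $\widehat{P}_1$ is transient the right-hand sum is finite, so $P_1$ is transient.

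The main obstacle is exactly this second step: the usual energy/capacity machinery characterises transience only for reversible chains, and a direct attempt to symmetrise fails in general (the deterministic shift on $\Z$ is transient while its symmetrisation, simple random walk, is recurrent, so $P$ and $\widehat{P}$ need not share type). What saves the argument is that the inequality goes the right way: one only needs $G^{P_1}\le G^{\widehat{P}_1}$, i.e. that symmetrising can only \emph{increase} the return sum, which is precisely the content of the resolvent Cauchy--Schwarz computation. I would take care to state the capacity criterion of the first step in the exact form (base point, finitely supported test functions) available in the literature, and to justify the interchange of limit and sum as $s\uparrow 1$.
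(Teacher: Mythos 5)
Your proof is correct. Note that the paper does not prove this lemma at all: it is quoted from Baldi--Lohou\'e--Peyri\`ere and the reader is referred to Kaimanovich and to Woess (Sections 2.C, 3.A) for the general form, so there is no internal proof to compare against. Your two-step argument --- (i) entrywise domination of Dirichlet forms plus the capacity criterion to get transience of the symmetrisation $\widehat{P}_1$, and (ii) the resolvent Cauchy--Schwarz computation giving $G^{P_1}_s(o,o)\le G^{\widehat{P}_1}_s(o,o)$ to pass back to the non-reversible $P_1$ --- is exactly the standard route in those references, and you correctly identify the one genuinely delicate point (that $P$ and $\widehat{P}$ need not have the same type, but the Green function inequality only needs to go one way). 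Two minor remarks: the ``for some, equivalently any, base point'' clause in your capacity criterion implicitly uses irreducibility, which the lemma does not state but which holds in every application in the paper (and one can instead argue pointwise via $G(o,o)=m(o)/\mathrm{cap}(o)$); and you are right that the paper's ``$\varepsilon\ge 0$'' must be read as $\varepsilon>0$, since the hypothesis is vacuous otherwise.
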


Here doubly stochastic kernels means that the operators are reversible and the inverse of each is also Markov. Equivalently, they preserve the counting measure; it is worth noting that the result holds true more generally for operators with any common stationary measure, see Kaimanovich~\cite[Section~3.C]{kaimanovichthompson}; see also Woess~\cite[Section~2.C,3.A]{woess2000random}. For the walks considered in this article, it is direct to verify that, for all probability measures, $p(x,y)=\mu(x^{-1}y)$ is doubly stochastic (as the inverse operator is defined by $(x,y)\mapsto\mu(y^{-1}x)$).

We recall that a random walk is called \textit{transient} if, for any point, almost every trajectory leaves that point after finite time. Otherwise, the walk is called \textit{recurrent} and there is a point that the walk almost surely visits an infinite amount of times. A graph is called transient (recurrent) if the simple random walk on it is transient (recurrent). The \textit{Green function} $G$ is defined by $G_z(x,y)=\sum_{n\in\N}p^{(n)}(x,y)z^n$ where $p^{(n)}$ is the $n$-time transition probability of $p$. In other words, $p^{(n)}(x,y)$ is the probability that a random walk starting in $x$ is at $y$ after $n$ steps. We will write $G(x,y)=G_1(x,y)$. A walk is transient if and only if $G(x,x)<\infty$ for all $x\in X$.

Remark that recurrent walks do not converge to the end space. However, it is possible for a measure on a group to induce a transient walk even if the uniform measure is recurrent, in which case we can apply Theorem~\ref{main}. Here we give an example of that situation in which the graph has infinitely many ends.

\begin{ex}
	Consider the graph $\Psi$ in Figure~\ref{recc}. Consider the action of the free group on two generators $F_2$ on it where the first generator $a$ sends each vertex to the right, and the second generator $b$ sends a vertex to the vertex above if possible, and to itself otherwise. The graph is recurrent. Consider the measure $\mu(a)=\frac{3}{8}$, $\mu(a^{-1})=\frac{1}{8}$, $\mu(b)=\mu(b^{-1})=\frac{1}{4}$. It is transient and converges towards the ends defined by the right rays.
\end{ex}

\begin{figure}[!h]\centering\caption{A recurrent graph with infinitely many ends}\label{recc}\begin{tikzpicture}
		\tikzset{no edge/.style={edge from parent/.append style={draw=none}}}
		\tikzset{node/.style={circle,draw,inner sep=0.7,fill=black}}
		\tikzset{every loop/.style={min distance=8mm,in=55,out=125,looseness=10}}
		\tikzstyle{level 1}=[level distance=2.4cm,sibling distance=3cm]
		
		\node[node,label=above:{\dots}](){}
		child[grow=down,level distance=1cm]{node[node]{}
			child[grow=left,level distance=2.4cm]{node[node](12){}child[grow=left,level distance=2.4cm]{node[node,label=left:{\dots}](14){}}}
			child[grow=right,level distance=2.4cm]{node[node](2){}child[grow=right,level distance=2.4cm]{node[node,label=right:{\dots}](4){}}}
			child[grow=down,level distance=1cm]{node[node]{}
				child[grow=left,level distance=2.4cm]{node[node](5){}child[grow=left,level distance=2.4cm]{node[node,label=left:{\dots}](6){}}}
				child[grow=right,level distance=2.4cm]{node[node](7){}child[grow=right,level distance=2.4cm]{node[node,label=right:{\dots}](9){}}}
				child[grow=down,level distance=1cm]{node[node]{}
					child[grow=left,level distance=2.4cm]{node[node](10){}child[grow=left,level distance=2.4cm]{node[node,label=left:{\dots}](11){}}}
					child[grow=right,level distance=2.4cm]{node[node](4b){}child[grow=right,level distance=2.4cm]{node[node,label=right:{\dots}](6b){}}}
					child[grow=down,level distance=1cm]{node[node,label=below:{\dots}]{}}
				}
			}
		};
		\draw (12) edge[loop above] (12);
		\draw (14) edge[loop above] (14);
		\draw (2) edge[loop above] (2);
		\draw (4) edge[loop above] (4);
		\draw (4b) edge[loop above] (4b);
		\draw (5) edge[loop above] (5);
		\draw (6) edge[loop above] (6);
		\draw (6b) edge[loop above] (6b);
		\draw (7) edge[loop above] (7);
		\draw (9) edge[loop above] (9);
		\draw (10) edge[loop above] (10);
		\draw (11) edge[loop above] (11);
\end{tikzpicture}\end{figure}
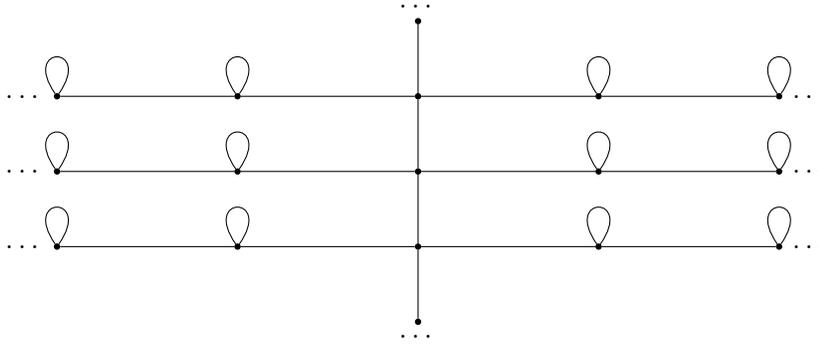

If we do not require the measure on $F_2$ to have a finite first moment, it can be chosen symmetric while the induced walk remains transient. This can be done on any graph containing an infinite array, see~\cite[Lemma~7.1]{erschsubexp}. Furthermore, we can construct recurrent graphs for which it is possible to have symmetric measures (on the acting group) with finite first moments that induce transient walks:

\begin{ex}
	Consider the graph $\Psi'$ obtained by $\Psi$ by replacing the horizontal lines with $\Z^2$ planes. It is a recurrent graph. Consider the free product $\Z\ast\Z^2$ with generators $a\in\Z$ and $b,c\in\Z^2$. Consider its action on $\Psi'$ where $a$ moves a vertex to the vertex above if possible, and to itself otherwise, and $b$ and $c$ act horizontally. There is a symmetric transient measure $\mu$ on $\Z^2$ with a finite first moment. Consider $\mu'=\frac{1}{4}(\delta_a+\delta_{a^{-1}})+\frac{1}{2}\mu$. It induces a transient walk on $\Psi'$, which by Theorem~\ref{main} almost surely converges to an element of end space.
\end{ex}

Let us recall the exact statement of Theorem~21.16 from the book of Woess~\cite{woess2000random}. For a graph $X$ and a Markov operator $P$ on it, the theorem states:

\begin{thm}[{{\cite[Theorem~21.16]{woess2000random}}}]\label{woess}
	If $(X,P)$ is uniformly irreducible and has a uniform first moment, and $\rho(P)<1$, then the random walk defined by $(X,P)$ converges almost surely to a random end of $X$.
\end{thm}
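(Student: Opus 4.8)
The plan is to combine the exponential decay furnished by $\rho(P)<1$ with the control on jump lengths coming from the uniform first moment, and to reduce convergence to an end to a ``commitment'' statement for each finite separating set. First I would record the consequences of the hypotheses. Since $\rho(P)<1$, the return probabilities decay exponentially, so $G(x,x)=\sum_n p^{(n)}(x,x)\le (1-\rho(P))^{-1}$ uniformly in $x$; in particular the walk is transient. As $X$ is locally finite, transience forces every vertex to be visited finitely often, hence $d(\mathfrak{o},Z_n)\to\infty$ almost surely, i.e. the trajectory leaves every finite set. The uniform first moment enters through Markov's inequality: for every $x$ and every $r\ge 1$,
$$\sum_{y\,:\,d(x,y)\ge r} p(x,y)\ \le\ \frac{1}{r}\sum_y p(x,y)\,d(x,y)\ \le\ \frac{M}{r},$$
so a single step crosses graph-distance $r$ with probability at most $M/r$. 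Finally, doubly stochasticity makes the kernel reversible for the counting measure, and the expected number of transitions $x\to y$ over the whole trajectory equals $G(\mathfrak{o},x)\,p(x,y)$.

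Next I would reduce the theorem to a local statement. Fixing an exhausting sequence $K_1\subset K_2\subset\cdots$ as in Definition~\ref{end}, convergence to an end is equivalent to: almost surely, for each $m$ the trajectory is eventually contained in a single connected component of $X\setminus K_m$. Indeed, together with $d(\mathfrak{o},Z_n)\to\infty$ the eventual components are infinite and nested, and the resulting sequence $U_1\supseteq U_2\supseteq\cdots$ is precisely a point of the inverse limit. So it suffices to fix a finite set $K$ and prove that the walk crosses $K$ only finitely often, a crossing being a time $n$ with $Z_n,Z_{n+1}$ in different components of $X\setminus K$.

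To estimate crossings I would split them into those that pass through $K$ and those that jump over it. Since any path between two distinct components of $X\setminus K$ meets $K$, a crossing realized by a genuine edge of the graph is a visit to $K$, of which transience guarantees finitely many almost surely. The remaining crossings are over-jumps: steps from $x$ to $y$ in different components, which by the same separation property have length at least $d(x,K)+d(y,K)\ge d(x,K)$. Using the transition-count identity, the expected number of over-jumps is
$$\sum_{x}\ G(\mathfrak{o},x)\!\!\sum_{y\ \text{in another component}}\!\! p(x,y)\ \le\ M\sum_{x}\frac{G(\mathfrak{o},x)}{\max(1,d(x,K))},$$
and one wants this finite, so that Borel--Cantelli yields finitely many over-jumps. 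Here the exponential decay from $\rho(P)<1$ must be leveraged to make the Green mass $\sum_{d(x,K)=d} G(\mathfrak{o},x)$ decay fast enough in $d$ to defeat the (possibly exponential) growth of spheres in $X$.

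The hard part is exactly this last estimate, and I expect it to be the main obstacle. The difficulty is that a mere first moment permits heavy-tailed jumps, so the tail bound $M/r$ is only polynomial and, pointwise, the Green function decays only polynomially in $d(x,K)$; on a graph of exponential growth a crude union bound over landing points then diverges (the relevant series is comparable to the harmonic series), and in fact the expected number of over-jumps can be infinite even when almost surely finite. To overcome this I would not estimate the chance of reaching each far vertex separately, but work at a value $s\in(1,1/\rho(P))$ of the generating-function variable, where $\sup_x G(x,x\mid s)<\infty$ still holds by uniform irreducibility, and attach to each over-jump a geometric factor $s^{-d(x,K)}$ (an over-jump of length $\ell$ must accrue at least $\ell$ of its weight inside the exponentially convergent regime); this geometric gain is what simultaneously absorbs the polynomial tail and the volume growth. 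An alternative, cleaner route bypasses crossing counts: every transient chain converges almost surely to a point of its Martin boundary, and under these hypotheses one shows the end compactification of Definition~\ref{end} is a continuous quotient of the Martin compactification, a Martin-convergent escaping sequence being forced to funnel into a single end-component, after which the limit end is obtained by pushing the Martin limit forward. In either route the genuine difficulty is the same, namely ruling out that unbounded, only-first-moment jumps let the trajectory straddle a fixed finite separator infinitely often, and it is precisely the spectral gap $\rho(P)<1$ that must be played against this.
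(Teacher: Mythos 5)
First, a point of orientation: the paper does not prove this statement at all --- Theorem~\ref{woess} is quoted verbatim from \cite[Theorem~21.16]{woess2000random} precisely so that its proof can be omitted. The closest internal comparison is the paper's proof of Theorem~\ref{main} in Section~3, which has the same skeleton as your plan (fix a finite separating set $K$, show the walk changes connected component of $X\setminus K$ only finitely often, sum the expected number of changes against the Green function) but escapes the hard estimate entirely by using the group structure: there $G(\mathfrak{o},x)=G^{(\check p)}(x,\mathfrak{o})\le G^{(\check p)}(\mathfrak{o},\mathfrak{o})$ uniformly in $x$, so the whole sum is controlled by $G^{(\check p)}(\mathfrak{o},\mathfrak{o})\|f\|_1$ with $\|f\|_1\le|K|\,\|\mu\|_1$. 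For a general Markov operator no such uniform bound on $G(\mathfrak{o},\cdot)$ exists, and that is exactly where your argument must (and does) stop.

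Concretely, the gap is the estimate you yourself flag as ``the main obstacle.'' Your reduction to finitely many crossings is correct, and so is the identity $\mathbb{E}[\#\{n:Z_n=x,\ Z_{n+1}=y\}]=G(\mathfrak{o},x)p(x,y)$ (which, incidentally, needs neither reversibility nor double stochasticity --- neither is a hypothesis of Theorem~\ref{woess}, so that remark should be dropped). But the Markov-inequality bound $\sigma_x([r,\infty))\le M/r$ is too lossy: after grouping by $r=d(x,K)$ your majorant is $\sum_r\frac{1}{r}\sum_{x:d(x,K)=r}G(\mathfrak{o},x)$, which diverges like the harmonic series even under the best imaginable uniform control of the sphere sums. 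What a correct proof needs is the pair of facts (i) $\sum_r\sigma([r,\infty))<\infty$, i.e.\ the \emph{summability} of the tails rather than their pointwise decay --- this is exactly the finite first moment --- played against (ii) a bound on $\sum_{x:d(x,K)=r}G(\mathfrak{o},x)$ that is uniform in $r$, and it is in establishing (ii) that $\rho(P)<1$ and uniform irreducibility must actually be used. Your proposal gestures at (ii) via a generating-function variable $s\in(1,1/\rho(P))$ but never carries out the computation, and the alternative route through the Martin boundary (``show the end compactification is a quotient of the Martin compactification'') is essentially a restatement of the theorem rather than a proof of it. Note also that the necessity of (ii) is not a technicality: Proposition~\ref{counterexample} in this very paper exhibits a transient, uniformly irreducible chain with uniform first moment that does \emph{not} converge to an end, so any argument that uses $\rho(P)<1$ only through transience, as your write-up does up to the unexecuted final step, cannot be completed. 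As it stands the proposal is a correct plan with the decisive lemma missing.
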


Let us define the concepts in the statement. The walk is uniformly irreducible if there exists $c>0$ and finite $K\in\N$ such that for all neighbouring vertices $x$ and $y$, there exists $k\leq K$ such that $p^{(k)}(x,y)\geq c$. The step distribution on a point $x\in X$ is defined as $\sigma_x(n)=\sum_{y:d(x,y)=n}p(x,y)$. The step distributions are \textit{tight} if there is a distribution $\sigma$ on $\N_0$, such that for all $x$ and all $n$, the tails $\sigma_x([n,+\infty))$ are bounded by the tails of $\sigma$. The walk has uniform first moment if the step distributions are tight with some $\sigma$ that has finite first moment. The spectral radius is $\rho(P)=\limsup_{n\rightarrow\infty}p^{(n)}(x,y)^{1/n}$ (this quantity does not depend on $x$ and $y$). It is straightforward to check that if $\rho(P)<1$, then the random walk is transient. Moreover, applying the definition for $x=y$ we see that $\rho(P)<1$ if any only if the probability of return to the origin decreases exponentially. We will show that the result of Theorem~\ref{woess} is not true without the assumption $\rho(P)<1$. By $\sign$ we denote the sign function on $\Z$: $\sign(z)=1$ if $x\geq0$ and $\sign(x)=-1$ if $x<0$.

\begin{propo}\phantomsection\label{counterexample}
\begin{enumerate}
\item There is a graph $X$ and a Markov operator $P$ on it such that $(X,P)$ is transient, uniformly irreducible and has a uniform first moment, but the random walk defined by $(X,P)$ doesn't converge almost surely to a random end of $X$.

\item Consider the Markov chain $(\Z,P_{p_n,\varepsilon_n})$ which, given $p_n\geq0$ and $\varepsilon_n\geq0$, is defined as

\begin{equation*}
	P(x,y)=\begin{cases}
		(1-p_n)\frac{1+\varepsilon_n}{2} &\mbox{for } y=\sign(x)(|x|+1)\\
		(1-p_n)\frac{1-\varepsilon_n}{2} &\mbox{for } y=\sign(x)(|x|-1)\\
		p_n &\mbox{for }y=-x\\
		0&\mbox{otherwise.}
	\end{cases}
\end{equation*}

There is a choice of $p_n\geq0$ and $\varepsilon_n\geq0$ such that $(\Z,P_{p_n,\varepsilon_n})$ is transient, uniformly irreducible, has uniform first moment and has an infinite expected number of steps where the sign changes. In particular, it verifies the conditions of (1).
\end{enumerate}
\end{propo}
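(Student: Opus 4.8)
The plan is to prove (2) and observe that its chain on $\Z$ immediately witnesses (1). Throughout, I regard $X=\Z$ as the usual nearest-neighbour graph, which has exactly two ends $+\infty$ and $-\infty$; the transition $x\mapsto -x$ is then a jump of length $2|x|$, while the other two moves are nearest-neighbour. The key reduction is to project onto levels: set $L_k=|x_k|$. Since the only level-preserving move is the sign flip $x\mapsto-x$ (which genuinely changes the sign precisely when $x\neq0$), the process $(L_k)$ is a \emph{lazy birth--death chain} on $\N_0$ whose state $n$ has up-probability $(1-p_n)\tfrac{1+\varepsilon_n}{2}$, down-probability $(1-p_n)\tfrac{1-\varepsilon_n}{2}$, and holding probability $p_n$; moreover, up to the finitely many visits to $0$, the number of sign changes of $(x_k)$ equals the number $H$ of holding steps of $(L_k)$. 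Because the walk will be transient with $L_k\to\infty$, convergence to an end is equivalent to eventual stabilisation of the sign, i.e.\ to $H<\infty$. So I must exhibit parameters for which $H=\infty$ with positive probability.

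Next I choose the parameters and dispatch the routine verifications. Take $\varepsilon_n\asymp c/n$ with $c>\tfrac12$, and $p_n=\tfrac1{n(\log n)^2}$ (with the first few terms adjusted so that all probabilities lie in $[0,1]$ and stay away from the degenerate values). The embedded non-lazy birth--death chain is transient iff $\sum_n\prod_{k\le n}\tfrac{1-\varepsilon_k}{1+\varepsilon_k}<\infty$, and $\prod_{k\le n}\tfrac{1-\varepsilon_k}{1+\varepsilon_k}\asymp\exp(-2\sum_{k\le n}\varepsilon_k)\asymp n^{-2c}$, so $c>\tfrac12$ gives transience; transience of $(L_k)$ forces transience of $(x_k)$ since each vertex is visited no more often than its level. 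Standard electric-network estimates ($R_{\mathrm{eff}}(n\leftrightarrow\infty)\asymp n^{1-2c}$, stationary weight $\pi_n\asymp n^{2c}$) then give the expected occupation time $V_n:=\mathbb{E}[\#\{k:L_k=n\}]\asymp n$. Uniform irreducibility holds with $K=1$ because the two nearest-neighbour probabilities are bounded below uniformly in $n$. Uniform first moment holds because the only long jumps are the sign flips, of length $2n$ and mass $p_n$ at level $n$, so the step distributions are dominated by the fixed $\sigma$ with $\sigma([m,\infty))=\sup_{n\ge m/2}p_n$, which has finite first moment exactly because $\sum_n p_n<\infty$. Finally $\mathbb{E}[H]=\sum_n p_n V_n\asymp\sum_n\tfrac1{(\log n)^2}=\infty$, the asserted infinite expected number of sign changes.

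The heart of the argument, and the main obstacle, is to upgrade $\mathbb{E}[H]=\infty$ to $\mathbb{P}(H=\infty)>0$ (which already gives (1), since ``does not converge almost surely'' means convergence fails with positive probability). Infinite expectation alone is insufficient, as a heavy-tailed but a.s.\ finite $H$ would also have infinite mean, so a quantitative argument is required. I would run a second-moment (Paley--Zygmund) estimate on the truncations $H_N=\sum_{n\le N}p_nT_n$, where $T_n$ is the occupation time of level $n$: one has $\mathbb{E}[H_N]\asymp N/(\log N)^2\to\infty$, and the crux is the inequality $\mathbb{E}[H_N^2]\le C\,(\mathbb{E}[H_N])^2$, i.e.\ $\mathbb{E}[T_nT_m]\lesssim V_nV_m$ for the positively correlated occupation times. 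I expect to obtain this by decomposing, for $n<m$, the visits to $n$ into those before and after the first passage $\tau_m$ and applying the strong Markov property at $\tau_m$ together with the reversible (geometric-excursion) structure of the birth--death chain, so that the off-diagonal contribution does not dominate. Paley--Zygmund then yields $\mathbb{P}(H_N\ge\tfrac12\mathbb{E}[H_N])\ge\delta>0$ uniformly in $N$, whence $\mathbb{P}(H=\infty)\ge\delta>0$, proving the proposition and hence (1). I would also note that $\{H=\infty\}$ is a tail event of the transient nearest-neighbour chain $(L_k)$ on the one-ended half-line $\N_0$, whose tail $\sigma$-field is trivial; combined with $\mathbb{P}(H=\infty)>0$ this gives the stronger conclusion $\mathbb{P}(H=\infty)=1$, so the walk almost surely changes sign infinitely often and almost surely fails to converge to an end.
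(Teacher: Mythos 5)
Your proposal follows the same core route as the paper's proof: fold the chain onto the level process $|x_k|$, which is a nearest-neighbour (plus holding) birth--death chain on $\N_0$, read off transience from $\sum_n\prod_{k\le n}\frac{1-\varepsilon_k}{1+\varepsilon_k}<\infty$, and show that the expected number of sign flips $\sum_n p_n\,\mathbb{E}[T_n]$ diverges using the explicit Green function of the folded chain. Your parameters differ (the paper takes $r(e_k)=\frac{1}{k(\ln k)^2}$ and $p_k=\frac{1}{k^2(\ln k)^2}$, giving $\mathbb{E}[T_n]\asymp n\ln n$; you take $\varepsilon_n\asymp c/n$ and $p_n=\frac{1}{n(\log n)^2}$, giving $\mathbb{E}[T_n]\asymp n$), and your larger $p_n$ is legitimate because you use the sharp criterion for uniform first moment, $\sum_n p_n<\infty$ (the dominating tail at $m$ only needs to majorise $\sup\{p_n:2n\ge m\}$), whereas the paper works with the stronger sufficient condition $\sum_n np_n<\infty$. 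The genuine difference is that you go one step further: the paper stops at ``the expected number of sign changes is infinite'', which is literally what part (2) asserts, and you correctly observe that infinite expectation alone does not formally yield failure of almost sure convergence; your Paley--Zygmund step addresses exactly this. That step, which you leave as a sketch, closes more easily than you anticipate: since the level chain reaches every $n$ with probability $1$, each $T_n$ is geometric, so $\mathbb{E}[T_n^2]\le 2V_n^2$ and Cauchy--Schwarz already gives $\mathbb{E}[T_nT_m]\le 2V_nV_m$, whence $\mathbb{E}[W_N^2]\le 2(\mathbb{E}[W_N])^2$ for $W_N=\sum_{n\le N}p_nT_n$ and $\mathbb{P}(\sum_np_nT_n=\infty)\ge\frac{1}{8}$; conditional Borel--Cantelli then converts this into infinitely many actual sign flips with positive probability, which is all that ``does not converge almost surely'' requires. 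The concluding appeal to tail triviality of the half-line chain to upgrade to probability $1$ is plausible but unnecessary for the proposition, and is the only assertion left unjustified.
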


The exact values that appear in the proof are $p_n=\frac{1}{n^2(\ln n)^2}$ and $\varepsilon_n=\frac{(n+1)(\ln(n+1))^2-n(\ln n)^2}{(n+1)(\ln(n+1))^2+n(\ln n)^2}$.

\begin{proof}
	We will find sufficient sufficient conditions on $p_n\geq0$ and $\varepsilon_n\geq0$ under which $(\Z,P_{p_n,\varepsilon_n})$ verifies the conditions we seek, and then provide a choice that satisfies those conditions. Specifically, the sufficient conditions are (\ref{pnfirstmoment}),(\ref{unifirr}),(\ref{transient}) and (\ref{jumps}).
	
	The tails $\sigma_x([n,+\infty))$ are bounded by the tail of the distribution $\sigma$ on $\N_0$ defined by $\sigma(0)=\sigma(1)=1$, $\sigma(2n)=p_n$ for $n\geq1$ and $\sigma(x)=0$ otherwise. The Markov chain $(\Z,P_{p_n,\varepsilon_n})$ has uniform first moment if and only if $\sigma$ has finite first moment, or equivalently
	
	\begin{equation}\label{pnfirstmoment}
		\sum_{n\in\N} np_n<\infty.
	\end{equation}
	
	For $(\Z,P_{p_n,\varepsilon_n})$ to be uniformly irreducible, it would suffice that there should exist $c>0$ such that $(1-p_n)\frac{1-\varepsilon_n}{2}\geq c$ for all $n$. If we have
	\begin{equation}\label{unifirr}
		p_n\xrightarrow{n\rightarrow\infty}0\mbox{ and }\varepsilon_n\xrightarrow{n\rightarrow\infty}0
	\end{equation}
	then $(1-p_n)\frac{1-\varepsilon_n}{2}\xrightarrow{n\rightarrow\infty}\frac{1}{2}$. In that case, replacing if necessary the values a finite number of $p_n$ and/or $\varepsilon_n$ with $0$, we can have $(1-p_n)\frac{1-\varepsilon_n}{2}\geq c$.
	
	To study the transience of $(\Z,P_{p_n,\varepsilon_n})$ we consider $\widetilde{P}$ on $\N_0$ defined by $\widetilde{P}(k,k+1)=(1-p_n)\frac{1+\varepsilon_n}{2}$, $\widetilde{P}(k,k-1)=(1-p_n)\frac{1-\varepsilon_n}{2}$ and $\widetilde{P}(k,k)=p_n$. It is a nearest neighbour random walk on $\N_0$ and its transience is equivalent to the transience of $(\Z,P_{p_n,\varepsilon_n})$. Nearest neighbour random walks on $\N_0$ are well understood. As seen in \cite[Section~2.16]{woess2000random}, $(\N_0,\widetilde{P})$ is transient if and only if
	\begin{equation}\label{transient}
		\sum_{k=1}^\infty r(e_k)<\infty
	\end{equation}
	where $r(e_k)=\frac{\widetilde{P}(k-1,k-2)\dots\widetilde{P}(1,0)}{\widetilde{P}(0,1)\dots\widetilde{P}(k-1,k)}$. We have $\frac{r(e_{k+1})}{r(e_k)}=\frac{1-\varepsilon_k}{1+\varepsilon_k}$ and therefore defining $\varepsilon_k$ is equivalent to defining $r(e_k)$.
	
	Finally, if the Green function of $\widetilde{P}$ is $G^{(\widetilde{P})}$, then the expected number of "jumps" between $n$ and $-n$ is $G^{(\widetilde{P})}(n,n)p_n$. We wish to obtain $\sum_nG^{(\widetilde{P})}(n,n)p_n=\infty$. From the results of \cite[Example~2.13,~Section~2.16]{woess2000random} it follows that $G^{(\widetilde{P})}(n,n)=\frac{1}{r(e_n)\widetilde{P}(n,n-1)}\sum_{k=n+1}^\infty r(e_k)$. If $\widetilde{P}(n,n-1)\geq c$, it would suffice to have
	
	\begin{equation}\label{jumps}
		\sum_{n\in\N}p_n\frac{1}{r(e_n)}\sum_{k=n+1}^\infty r(e_k)=\infty.
	\end{equation}
	
	We now define $r(e_k)$ and $p_k$ and claim that those choices verify conditions (\ref{pnfirstmoment}),(\ref{unifirr}),(\ref{transient}) and (\ref{jumps}). Let
	
	$$r(e_k)=\frac{1}{k(\ln k)^2}\mbox{ and }p_k=\frac{1}{k^2(\ln k)^2}.$$
	
	We first prove condition (\ref{pnfirstmoment}). It suffices to observe that
	
	$$\sum_{n\geq2} np_n\leq\int_1^\infty\frac{x}{x^2(\ln x)^2}dx=-\frac{1}{\ln x}\Biggr|_1^\infty,$$
	which is finite. As $r(e_k)=kp_k$, this also proves condition (\ref{transient}). Condition (\ref{unifirr}) is straightforward.
	
	We now only need to prove condition (\ref{jumps}). Similarly, we have
	$$\sum_{k=n+1}^\infty r(e_k)\geq\int_{n+1}^\infty\frac{1}{x(\ln x)^2}dx=-\frac{1}{\ln x}\Biggr|_{n+1}^\infty$$
	and thus
	$$\frac{1}{r(e_n)}\sum_{k=n+1}^\infty r(e_k)\geq\frac{n(\ln n)^2}{\ln(n+1)}\approx n\ln n.$$
	
	Then
	$$\sum_{n\in\N}p_n\frac{1}{r(e_n)}\sum_{k=n+1}^\infty r(e_k)\geq
	\sum_{n\in\N}\frac{1}{n\ln(n+1)}\geq\int_2^\infty\frac{1}{x\ln x}dx=\ln(\ln(x))\Biggr|_2^\infty$$
	which is not finite.
\end{proof}

\section{Proof of main Theorem~\ref{main}}

Consider a finite set $K\subset X$ and denote $\Gamma_1,\dots,\Gamma_k$ the connected components of its complement. We will study the probability to change the component at step $n$ and prove that the sum over $n$ is finite. 

Consider $x\in X\setminus K$ and $g\in G$. We will study the probability that $x.g$ is not in the same component. Let $g=s_1s_2\dots s_n$ where $|g|=n$ and $s_i\in S$. If $x$ and $x.g$ are in different components, by definition the path $x,x.s_1,\dots,x.g$ passes trough $K$. Therefore there is $i$ such that $x.s_1s_2\dots s_i\in K$. Equivalently, $\langle\sum_{i\leq n}\tau_{s_1s_2\dots s_i}\delta_x,\sum_{k\in K}\delta_k\rangle\geq1$ where $\delta_y$ is the characteristic function at a given point $y$ and $\tau_f$ is the translation defined by $\tau_f\delta_y=\delta_{y.f}$. We observe

$$\langle\sum_{i\leq n}\tau_{s_1s_2\dots s_i}\delta_x,\sum_{k\in K}\delta_k\rangle=\langle\delta_x,\sum_{i\leq n}\sum_{k\in K}\tau_{s_i^{-1}\dots s_2^{-1}s_1^{-1}}\delta_k\rangle.$$

We denote

$$f=\sum_{s_1s_2\dots s_n\in G}\mu(s_1s_2\dots s_n)\sum_{i\leq n}\sum_{k\in K}\tau_{s_i^{-1}\dots s_2^{-1}s_1^{-1}}\delta_k.$$
Then the probability that $x$ and $x.g$ are in different components is not greater than $\langle\delta_x,f\rangle$. Furthermore, the $l^1$ norm of $f$ satisfies $\|f\|_1\leq|K|\|\mu\|_1$ where $\|\mu\|_1$ is the first moment of $\mu$. In particular, it is finite.

Take a random walk starting at a fixed point $\mathfrak{o}$ and consider $n$ large enough so that the transient walk has left $K$. The probability of changing component at step $n+1$ is then not greater than

$$\langle p^{(n)}\delta_\mathfrak{o},f\rangle.$$

We have

$$\sum_{n\in\N}\langle p^{(n)}\delta_\mathfrak{o},f\rangle=\sum_{n\in\N}\sum_{x\in X}p^{(n)}(\mathfrak{o},x)f(x)=\sum_{x\in X}f(x)G(\mathfrak{o},x)$$
where we will have the right to interchange the order of summation if we prove that the right-hand side is finite. Let $\check{p}$ be the kernel induced by the inverse measure $\check{\mu}:g\mapsto\mu(g^{-1})$, and $G^{(\check{p})}$ the Green function corresponding to that kernel. Then $G(\mathfrak{o},x)=G^{(\check{p})}(x,\mathfrak{o})$. It is a known property of the Green function that for all $x,y\in X$, we have $G^{(\check{p})}(x,y)\leq G^{(\check{p})}(y,y)$. This follows from the fact that the left hand side is the expected number of visits of $y$ of a walk starting at $x$, while the right hand side is the expected number of visits starting at $y$. Thus

$$\sum_{x\in X}f(x)G(\mathfrak{o},x)\leq G^{(\check{p})}(\mathfrak{o},\mathfrak{o})\|f\|_1<\infty.$$

This proves that after finite time, the walk almost surely stays in the same connected component of the complement of $K$. Applying this for an increasing exhaustive sequence of $K$, we obtain the result of Theorem~\ref{main}.

It is worth mentioning that this approach is similar to the one used by Kaimanovich~\cite[Theorem~3.3]{Kaimanovich1991} to prove pointwise convergence of the configuration of walks on lamplighter groups with a finite first moment.

\section{Thompson's group $F$}\label{thompsect}

We now apply Theorem~\ref{main} to Thompson's group $F$. The Schreier graph on the dyadic numbers has been described by Savchuk~\cite[Proposition~1]{slav10}(see Figure~\ref{sav}). We have the following self-similarity result:

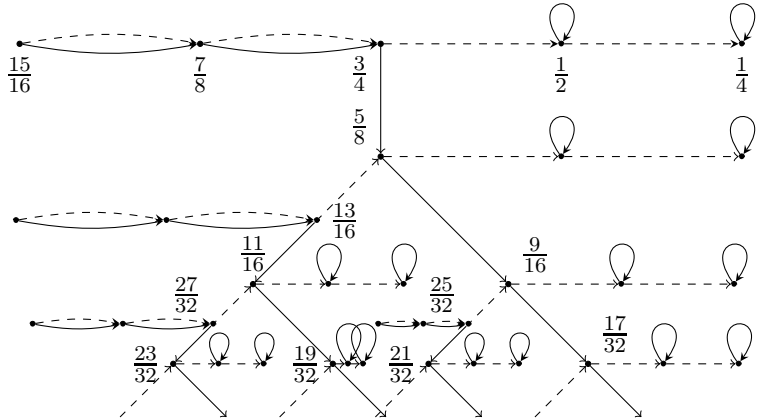
\begin{figure}[!h]\centering\caption{Schreier graph of the dyadic action of $F$ for the standard generators}\label{sav}\begin{tikzpicture}[-stealth]
		\tikzset{no edge/.style={edge from parent/.append style={draw=none}}}
		\tikzset{node/.style={circle,draw,inner sep=0.7,fill=black}}
		\tikzset{every loop/.style={min distance=8mm,in=55,out=125,looseness=10}}
		\tikzstyle{level 1}=[level distance=2.4cm,sibling distance=3cm]
		\tikzstyle{level 2}=[level distance=2.4cm,sibling distance=12mm]
		\tikzstyle{level 3}=[level distance=1.5cm,sibling distance=5mm]
		\tikzstyle{level 4}=[level distance=1cm,sibling distance=5mm]
		
		\node[node,label=south west:{$\frac{3}{4}$}](34){}
		child[grow=left,<-,level distance=2.4cm]{[no edge] node[node,label=below:{$\frac{7}{8}$}](78){}child[grow=left,<-,level distance=2.4cm]{[no edge] node[node,label=below:{$\frac{15}{16}$}](1516){}}}
		child[grow=right,level distance=2.4cm,dashed]{node[node,label=below:{$\frac{1}{2}$}](12){}child[grow=right,level distance=2.4cm,dashed]{node[node,label=below:{$\frac{1}{4}$}](14){}}}
		child[grow=down,->,level distance=1.5cm]{node[node,label=north west:{$\frac{5}{8}$}]{}
			child[grow=south west,<-,dashed,level distance=1.2cm]{node[node,label=right:{$\frac{13}{16}$}](1316){} 
				child[grow=left,<-,level distance=2cm]{[no edge] node[node](1316a){}child[grow=left,<-,level distance=2cm]{[no edge] node[node](1316b){}}}
				child[grow=south west,->,solid,level distance=1.2cm]{node[node,label=above:{$\frac{11}{16}$}](1116){}
					child[grow=south west,<-,dashed,level distance=7.5mm]{node[node,label=north west:{$\frac{27}{32}$}](2732){}
						child[grow=left,<-,level distance=1.2cm]{[no edge] node[node](2732a){}child[grow=left,<-,level distance=1.2cm]{[no edge] node[node](2732b){}}}
						child[grow=south west,->,solid,level distance=7.5mm]{node[node,label=left:{$\frac{23}{32}$}](2332){}
							child[grow=south west,<-,dashed,level distance=1cm]{}
							child[grow=south east,->,solid,level distance=1cm]{}
							child[grow=right,->,dashed,level distance=6mm]{node[node](1){} child[grow=right,level distance=6mm]{node[node](8){}}}}}
					child[grow=south east,->,solid,level distance=1.5cm]{node[node,label=left:{$\frac{19}{32}$}]{}
						child[grow=south west,<-,dashed,level distance=1cm]{}
						child[grow=south east,->,solid,level distance=1cm]{}
						child[grow=right,->,dashed,level distance=0.2cm]{node[node](9){} child[grow=right,level distance=0.2cm]{node[node](10){}}}}
					child[grow=right,->,dashed,level distance=1cm]{node[node](2){} child[grow=right,level distance=1cm]{node[node](11){}}}}}
			child[grow=south east,->,solid,level distance=2.4cm]{node[node,label=north east:{$\frac{9}{16}$}]{}
				child[grow=south west,<-,dashed,level distance=7.5mm]{node[node,label=north west:{$\frac{25}{32}$}](2532){}
					child[grow=left,<-,level distance=0.6cm]{[no edge] node[node](2532a){}child[grow=left,<-,level distance=0.6cm]{[no edge] node[node](2532b){}}}
					child[grow=south west,->,solid,level distance=7.5mm]{node[node,label=left:{$\frac{21}{32}$}](2332){}
						child[grow=south west,<-,dashed,level distance=1cm]{}
						child[grow=south east,->,solid,level distance=1cm]{}
						child[grow=right,->,dashed,level distance=0.6cm]{node[node](3){} child[grow=right,level distance=0.6cm]{node[node](3b){}}}}}
				child[grow=south east,->,solid,level distance=1.5cm]{node[node,label=north east:{$\frac{17}{32}$}]{}
					child[grow=south west,<-,dashed,level distance=1cm]{}
					child[grow=south east,->,solid,level distance=1cm]{}
					child[grow=right,->,dashed,level distance=1cm]{node[node](4){} child[grow=right,level distance=1cm]{node[node](4b){}}}}
				child[grow=right,->,dashed,level distance=1.5cm]{node[node](6){} child[grow=right,level distance=1.5cm]{node[node](6b){}}}}
			child[grow=right,->,dashed,level distance=2.4cm]{node[node](5){} child[grow=right, level distance=2.4cm]{node[node](7){}}}
		};
		\draw (1516) edge[bend right=10] (78);
		\draw (78) edge[bend right=10] (34);
		\draw (1516) edge[bend left=10,dashed] (78);
		\draw (78) edge[bend left=10,dashed] (34);
		\draw (1316b) edge[bend right=10] (1316a);
		\draw (1316a) edge[bend right=10] (1316);
		\draw (1316b) edge[bend left=10,dashed] (1316a);
		\draw (1316a) edge[bend left=10,dashed] (1316);
		\draw (2732b) edge[bend right=10] (2732a);
		\draw (2732a) edge[bend right=10] (2732);
		\draw (2732b) edge[bend left=10,dashed] (2732a);
		\draw (2732a) edge[bend left=10,dashed] (2732);
		\draw (2532b) edge[bend right=10] (2532a);
		\draw (2532a) edge[bend right=10] (2532);
		\draw (2532b) edge[bend left=10,dashed] (2532a);
		\draw (2532a) edge[bend left=10,dashed] (2532);
		\draw (12) edge[loop above] (12);
		\draw (14) edge[loop above] (14);
		\draw (1) edge[loop above,min distance=6mm,in=55,out=125,looseness=10] (1);
		\draw (2) edge[loop above] (2);
		\draw (3) edge[loop above,min distance=6mm,in=55,out=125,looseness=10] (3);
		\draw (3b) edge[loop above,min distance=6mm,in=55,out=125,looseness=10] (3b);
		\draw (4) edge[loop above] (4);
		\draw (4b) edge[loop above] (4b);
		\draw (5) edge[loop above] (5);
		\draw (6) edge[loop above] (6);
		\draw (6b) edge[loop above] (6b);
		\draw (7) edge[loop above] (7);
		\draw (8) edge[loop above,min distance=6mm,in=55,out=125,looseness=10] (8);
		\draw (9) edge[loop above] (9);
		\draw (10) edge[loop above] (10);
		\draw (11) edge[loop above] (11);
\end{tikzpicture}\end{figure}

\begin{lem}\label{equivsav}
	Consider the Schreier graphs of $F$ for its action on $\Fr$ (see Figure~\ref{sav}). We denote left (respectively right) branch the subgraph of the vertices $v$ for which any geodesic between $v$ and $\frac{5}{8}$ passes trough $\frac{13}{16}$ (respectively $\frac{9}{16}$). On the figure, those are the left and right branches of the tree, along with the rays starting at them. Then each branch can be embedded as a labelled graph into the other.
\end{lem}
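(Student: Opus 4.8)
\section*{Proof proposal}

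The plan is to read off from Savchuk's description \cite[Proposition~1]{slav10} (see Figure~\ref{sav}) a recursive, label-preserving grammar for the skeleton-with-rays, and then to realize each of the two required embeddings as an isomorphism onto a suitable sub-branch of the other. The underlying principle is that, since the Schreier graph is a tree and each vertex has exactly one $s$-neighbour for every generator $s\in S$, a label-preserving map defined on a connected subgraph is completely determined by the image of a single vertex together with the rule that each $s$-edge be sent to an $s$-edge. This reduces the lemma to identifying, inside each branch, an induced labelled sub-branch isomorphic to the other branch.

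First I would formalize the self-similarity visible in Figure~\ref{sav}. Call a skeleton vertex a \emph{stem vertex} if, like $\frac{13}{16},\frac{25}{32},\frac{27}{32}$, it carries an attached ray and a single skeleton-child, and a \emph{branching vertex} if, like $\frac{9}{16},\frac{11}{16},\frac{17}{32},\frac{19}{32}$, it carries an attached ray and two skeleton-children. Reading the generators off the edges (one generator per line style in the figure, its inverse recorded by the orientation, and the loops recording the points fixed by a generator along a ray), one checks that the local labelled picture is the same at every stem vertex and the same at every branching vertex, and that the children obey a fixed rule: the unique child of a stem vertex is a branching vertex joined by a solid edge, while the two children of a branching vertex are a stem vertex joined by a dashed edge and a branching vertex joined by a solid edge.

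Writing $\mathbf{S}$ and $\mathbf{B}$ for the labelled rooted graphs obtained by hanging the entire subtree below a stem vertex, respectively a branching vertex, this step yields a context-free recursion: $\mathbf{S}$ is a root with its ray and one solid edge to a copy of $\mathbf{B}$, and $\mathbf{B}$ is a root with its ray, one dashed edge to a copy of $\mathbf{S}$, and one solid edge to a copy of $\mathbf{B}$. Because the grammar is identical everywhere, any two stem subtrees are isomorphic as labelled rooted graphs, and likewise any two branching subtrees. The left branch $L$ is precisely the stem subtree rooted at $\frac{13}{16}$, so $L\cong\mathbf{S}$, while the right branch $R$ is the branching subtree rooted at $\frac{9}{16}$, so $R\cong\mathbf{B}$. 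The embeddings are then immediate: the branching vertex $\frac{9}{16}$ has a stem child $\frac{25}{32}$, and cutting the single edge between them exhibits inside $R$ an induced labelled sub-branch isomorphic to $\mathbf{S}\cong L$, giving $L\hookrightarrow R$; symmetrically the stem vertex $\frac{13}{16}$ has a branching child $\frac{11}{16}$ whose subtree is an induced labelled sub-branch of $L$ isomorphic to $\mathbf{B}\cong R$, giving $R\hookrightarrow L$. In each case the only edge deleted in passing to the sub-branch is the edge to the parent, which is equally absent from the branch being embedded, so no label is lost.

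The main obstacle is the verification in the second paragraph: one must confirm, from Savchuk's formulas rather than from the picture alone, that the labelled neighbourhood of every stem vertex (including the attached ray and its fixed-point loops) is genuinely identical, and likewise for every branching vertex, so that the recursion is truly context-free and the subtrees are isomorphic as labelled, and not merely unlabelled, rooted graphs. Once this uniformity is checked, the two embeddings follow with no further computation.
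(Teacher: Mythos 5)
Your proposal is correct and follows essentially the same route as the paper: the paper's proof also defines each embedding by sending $\frac{13}{16}\mapsto\frac{25}{32}$ and $\frac{9}{16}\mapsto\frac{11}{16}$ and extending equivariantly along the labelled tree. The only difference is that you make explicit, via the stem/branching recursion, the self-similarity check that the paper leaves implicit in its appeal to Figure~\ref{sav} and Savchuk's description.
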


Remark that stronger results of self-similarity of this graph have already been observed, see for example~\cite[Section~5.F]{kaimanovichthompson}.

\begin{proof}
	Each branch is a labelled tree, and thus an equivariant embedding is uniquely defined by the image of the root. We choose the image of $\frac{13}{16}$ to be $\frac{25}{32}$. This defines an embedding of the left branch into the right one. Similarly, choosing $\frac{11}{16}$ as the image of $\frac{9}{16}$ defines an embedding of the right branch into the left one.
\end{proof}

This implies:

\begin{lem}\label{nontriv}
	Fix a measure on $F$, the support of which generates $F$ as a semi-group such that the induced random walk on he dyadic numbers almost surely converges towards an end of the graph. Then the exit measure on the end space is not trivial.
\end{lem}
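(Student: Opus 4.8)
The plan is to argue by contradiction: I assume the exit measure is trivial, meaning it is a single atom, and then use the labelled embeddings of Lemma~\ref{equivsav} to transport that atom to a different end, which is absurd.

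First I record that triviality forces a common limit end. Write $\nu_x$ for the exit measure seen from the starting point $x$, i.e. the law of the end to which the walk started at $x$ converges. Since $\supp(\mu)$ generates $F$ as a semigroup, for any two vertices $x,y$ there is $k$ with $p^{(k)}(x,y)>0$, so $\nu_x=\sum_z p^{(k)}(x,z)\nu_z\geq p^{(k)}(x,y)\,\nu_y$; consequently all the $\nu_x$ are pairwise absolutely continuous. Hence if $\nu_{\mathfrak o}$ is an atom $\delta_\xi$, this domination forces $\nu_x=\delta_\xi$ for every $x$, with the same end $\xi$. So triviality of the exit measure means the walk converges almost surely to one fixed end $\xi$, from every starting vertex. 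Next I fix a finite separating set $K$ around $\frac 58$, so that the left branch $L$ and right branch $R$ of Lemma~\ref{equivsav} are two infinite components of $\Gamma\setminus K$ whose end sets $\partial L,\partial R$ are disjoint. I may assume $\xi\in\partial L$: the case $\xi\in\partial R$ is symmetric, and if $\xi$ lies in neither branch I slide $K$ to the first branch point along the ray representing $\xi$ and use the embedding of the two sub-branches there, which exists by homogeneity of the skeleton for the same reason as in Lemma~\ref{equivsav}.

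The heart of the argument is a transport of the walk through the embedding $\phi_{LR}\colon L\hookrightarrow R$. Because $\phi_{LR}$ is a labelled isomorphism of $L$ onto a sub-branch of $R$, it matches, term by term, exactly those increments whose entire labelled path stays inside the branch; hence it conjugates the walk killed as soon as an increment's labelled path meets $K$ (the \emph{branch-confined} walk on $L$) to the branch-confined walk on $\phi_{LR}(L)\subseteq R$, and carries the induced boundary map $\partial L\to\partial R$. To feed this device I need the confined walk on $L$ to survive forever with positive probability and then to converge to $\xi$. This is exactly where the estimate in the proof of Theorem~\ref{main} enters: that estimate bounds the expected number of steps whose labelled path meets $K$ by $\|f\|_1\,G^{(\check p)}(\mathfrak o,\mathfrak o)<\infty$, so almost surely only finitely many steps ever cross $K$. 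After the last such crossing the walk, together with the labelled paths of all later increments, is confined to a single component of $\Gamma\setminus K$; since the walk converges to $\xi\in\partial L$, that component is $L$. Therefore there is a vertex $v\in L$ from which the confined walk survives forever and converges to $\xi$ with positive probability.

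Applying $\phi_{LR}$ now yields the contradiction. From $\phi_{LR}(v)\in R$ the confined walk survives with the \emph{same} positive probability and converges to $\partial\phi_{LR}(\xi)\in\partial R$; on that event the full walk started at $\phi_{LR}(v)$ coincides with it and so converges to $\partial\phi_{LR}(\xi)$. But $\partial\phi_{LR}(\xi)\in\partial R$ and $\xi\in\partial L$ are distinct ends, contradicting $\nu_{\phi_{LR}(v)}=\delta_\xi$. Hence the exit measure is not an atom, which is the asserted non-triviality. I expect the main obstacle to be making this transport legitimate in the presence of unbounded jumps: a single increment may jump across the separating vertex, so $\phi_{LR}$ does not conjugate the walk globally but only after restriction to branch-internal labelled paths. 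The finite first moment is what rescues this, through the Theorem~\ref{main} estimate, which supplies a positive-probability event on which the confined and full walks agree; the remaining points to nail down are the reduction to a common atom above and the relocation of $K$ for ends lying in neither $L$ nor $R$.
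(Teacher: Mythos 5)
Your overall strategy --- assume the exit measure is an atom $\delta_\xi$, use semigroup generation to force $\nu_x=\delta_\xi$ for every starting vertex $x$, and then use the labelled embeddings of Lemma~\ref{equivsav} to transport positive mass to a disjoint set of ends --- is exactly the paper's argument, which the paper states in four sentences and leaves largely implicit (``by transitivity the restriction on the embedding also has non-zero mass''). Your confined-walk construction is a correct and genuinely useful way to make that transport rigorous in the presence of unbounded increments: the estimate $\sum_n\langle p^{(n)}\delta_{\mathfrak{o}},f\rangle\leq G^{(\check{p})}(\mathfrak{o},\mathfrak{o})\|f\|_1<\infty$ from the proof of Theorem~\ref{main} plus Borel--Cantelli gives a vertex $v\in L$ from which, with positive probability, no increment's labelled path ever meets $K$; on that event every labelled path stays in $L$, so its $\phi_{LR}$-image stays in $\phi_{LR}(L)\subseteq R$, the two trajectories are conjugate by $\phi_{LR}$, and the walk from $\phi_{LR}(v)$ converges to an end of $R$ with probability at least $P(A_v)>0$ (``at least'', not ``the same'', but that is all you need), contradicting $\nu_{\phi_{LR}(v)}=\delta_\xi$ with $\xi\in\partial L$.

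The one step that fails is your treatment of the case where $\xi$ lies in neither branch. The ends outside $\partial L\cup\partial R$ are precisely the three ends of the rays attached at $\frac{5}{8}$ and $\frac{3}{4}$, and an attached ray contains no branch point, so ``slide $K$ to the first branch point along the ray representing $\xi$'' has nothing to slide to and there are no two sub-branches there to exchange. This is exactly why the paper decomposes the end space into five pieces (the two branch-end sets and the three ray-end singletons) and notes that the rays themselves embed equivariantly into the branches: e.g.\ the ray $\frac{1}{2},\frac{1}{4},\dots$ at $\frac{3}{4}$ is label-isomorphic to the ray attached at $\frac{5}{8}$ or at a vertex inside a branch. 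With that embedding in hand, your transport argument applies verbatim with the tail of the attached ray playing the role of $L$ and its embedded copy playing the role of $\phi_{LR}(L)$; the image end again lies in a set not containing $\xi$, and the contradiction goes through. Replacing your ``slide $K$'' sentence by this ray-into-branch embedding completes the proof.
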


\begin{proof}
	We decompose the end space into five sets: two sets containing respectively the ends of the left or the right branch, and three sets that are the singletons corresponding to the rays at $\frac{5}{8}$ and $\frac{3}{4}$. The rays have equivariant embeddings into the branches. Combining with Lemma~\ref{equivsav}, this means that any of those five sets can be equivariantely embedded into another one. In particular, if the restriction of the exist measure on one of them has non-zero mass, then by transitivity the restriction on the embedding also has non-zero mass.
\end{proof}

\bibliographystyle{plain}
\bibliography{auto,manual}

\end{document}